\numberwithin{equation}{section}
\newtheorem{thm}{Theorem}[section]
\newtheorem{lem}[thm]{Lemma}
\newtheorem{defi}[thm]{Definition}
\newtheorem{rmk}[thm]{Remark}
\newcommand{\wt}{\operatorname{wt}}
\newcommand{\xqz}[1]{\lfloor #1 \rfloor}
\renewcommand{\mod}{\operatorname{mod}}
\def\Z{\mathbb{Z}}
\def\N{\mathbb{N}}
\begin{document}

\title{Twisted bimodules and associative algebras associated to VOAs}

\author{Shun Xu~\orcidlink{0009-0006-8080-8107}}

\address{School of Mathematical Sciences, Tongji University, Shanghai, 200092, China}

\email{shunxu@tongji.edu.cn}

\subjclass[2020]{17B69}

\keywords{Vertex operator algebras, Bimodules, Associative algebras, Universal enveloping algebra}

\begin{abstract} 
Let $V$ be a vertex operator algebra, $g$ be an automorphism of $V$ of order $T$, and  $m, n \in (1/T)\mathbb{N}$.  
In~\cite{HX2} and~\cite{HXX1}, it was shown respectively that the associative algebra $A_{g,n}(V)$ constructed by Dong, Li, and Mason~\cite{DLM3},  
and the $A_{g,n}(V)$--$A_{g,m}(V)$-bimodule $A_{g,n,m}(V)$ constructed by Dong and Jiang~\cite{DJ2}, are both isomorphic to certain subquotients of $U(V[g])$,  
where $U(V[g])$ denotes the universal enveloping algebra of $V$ with respect to $g$.  
In this paper, we give a unified and concise proof of these isomorphisms.
\end{abstract}

\maketitle

\section{Introduction}

Let $V$ be a vertex operator algebra and let $g$ be an automorphism of $V$ of finite order $T$.  
To study the twisted representation theory of $V$, various associative algebras have been introduced. In this paper, we focus on two principal constructions: one is Zhu's algebra $A(V)$ and its generalizations, and the other is the universal enveloping algebra $U(V[g])$ associated with the $g$-twisted structure.

For $n, m \in (1/T)\mathbb{N}$, Dong, Li, and Mason constructed a family of associative algebras $A_{g,n}(V)$ in \cite{DLM3}, where $A_{g,0}(V) = A(V)$ recovers Zhu's original algebra \cite{Z1}. Subsequently, Dong and Jiang introduced a family of $A_{g,n}(V)$--$A_{g,m}(V)$-bimodules $A_{g,n,m}(V)$ in \cite{DJ2}.

On the other hand, the universal enveloping algebra $U(V[g])$ of $V$ with respect to $g$ is a $(1/T)\mathbb{Z}$-graded associative algebra. For $n, m \in (1/T)\mathbb{N}$, one can construct from $U(V[g])$ the quotient algebras 
$
U(V[g])_0 / U(V[g])_0^{-n - 1/T}
$
and the quotient spaces
$
U(V[g])_{n-m} / U(V[g])_{n-m}^{-m - 1/T},
$
which carries the structure of a 
$
 U(V[g])_0 / U(V[g])_0^{-n - 1/T} \text{--} U(V[g])_0 / U(V[g])_0^{-m - 1/T} 
$-bimodule. The precise relationship between these constructions is captured by the following theorem.

\begin{thm}\label{thm1.1}
For any $n,m \in (1/T)\mathbb{N}$, define a linear map
\[
\varphi_{n, m}: A_{g,n, m}(V) \longrightarrow U(V[g])_{n-m} / U(V[g])_{n-m}^{-m - 1/T}
\]
by sending $u + O_{n, m}(V)$ to $J_{m-n}(u) + U(V[g])_{n-m}^{-m - 1/T}$.  
Then $\varphi_{n, n}$ is an algebra isomorphism, and more generally, $\varphi_{n, m}$ is an $A_{g,n}(V)$--$A_{g,m}(V)$-bimodule isomorphism.
\end{thm}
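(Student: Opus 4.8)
The plan is to verify that $\varphi_{n,m}$ is well defined, that it is a homomorphism of $A_{g,n}(V)$--$A_{g,m}(V)$-bimodules --- which, for $m=n$, specializes to an algebra homomorphism --- that it is surjective, and that it is injective; the two isomorphism assertions then follow at once.

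First I would establish well-definedness. Writing $A_{g,n,m}(V)=V/O_{n,m}(V)$, this amounts to showing that $J_{m-n}$ carries every generator of $O_{n,m}(V)$ into $U(V[g])_{n-m}^{-m-1/T}$. The preliminary move is to fix a concrete description of $U(V[g])_{n-m}^{-m-1/T}$: it is the span of products $x\,y$ with $y\in U(V[g])_{-j}$ for some $j>m$ and $x$ of complementary degree $n-m+j$, equivalently the part of $U(V[g])_{n-m}$ that annihilates the ``degree $\le m$'' truncation of every $g$-twisted admissible $V$-module. With this in hand, I would expand $J_{m-n}$ applied to each generator of $O_{n,m}(V)$ by means of the defining (twisted Borcherds / iterated commutator) relations of $U(V[g])$ and observe that every resulting term acquires such a factor; the binomial-coefficient identities needed are precisely the ones already built into the $\circ$-type relation defining $O_{n,m}(V)$, so this step, though computational, is routine.

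Next I would check the bimodule property, which again reduces to congruences in $U(V[g])$: it suffices to prove $J_0(a)\,J_{m-n}(u)\equiv J_{m-n}(a\ast u)$ and $J_{m-n}(u)\,J_0(b)\equiv J_{m-n}(u\ast b)$ modulo $U(V[g])_{n-m}^{-m-1/T}$, where $\ast$ denotes the left $A_{g,n}(V)$-action and the right $A_{g,m}(V)$-action on $A_{g,n,m}(V)$. Each follows by expanding a product of two generating modes through the commutation relations and discarding the terms that, by the description above, lie in $U(V[g])_{n-m}^{-m-1/T}$; the case $m=n$ of the first congruence shows in particular that $\varphi_{n,n}$ respects products. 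For surjectivity I would use that $U(V[g])_{n-m}$ is spanned by ordered monomials $a_1(p_1)\cdots a_r(p_r)$ of total degree $n-m$ and show, by moving the most negative modes to the left, contracting adjacent modes, and inducting downward on the ``$\ge$''-filtration, that each such monomial is congruent modulo $U(V[g])_{n-m}^{-m-1/T}$ to a linear combination of single modes $J_{m-n}(u)$, $u\in V$; this is the twisted, higher-level analogue of the classical fact that Zhu's algebra is spanned by the zero modes.

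The hard part will be injectivity. By the spanning argument just used, $u\mapsto J_{m-n}(u)+U(V[g])_{n-m}^{-m-1/T}$ is a surjection from $V$, so $\varphi_{n,m}$ is injective precisely when the kernel of this surjection is contained in $O_{n,m}(V)$ --- the reverse inclusion being well-definedness. To obtain it I would realize both sides as spaces of operators on bottom levels of admissible $g$-twisted modules: for such a module $M$, the operators in $U(V[g])_{n-m}$ send the ``degree $\le m$'' truncation into the ``degree $\le n$'' truncation, $U(V[g])_{n-m}^{-m-1/T}$ acts as $0$ there, and --- by the results of Dong and Jiang \cite{DJ2} --- the family of all such $M$ detects $A_{g,n,m}(V)$ faithfully, so any $u\in V$ killed in $U(V[g])_{n-m}/U(V[g])_{n-m}^{-m-1/T}$ already lies in $O_{n,m}(V)$. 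Alternatively one can bypass module theory altogether by checking directly that each generator of $U(V[g])_{n-m}^{-m-1/T}$, once put into single-mode form by the reduction of the surjectivity step, becomes an element of $O_{n,m}(V)$. In either approach the one real difficulty is bookkeeping the interaction between the $(1/T)\Z$-grading of $U(V[g])$ and the decreasing ``$\ge$''-filtration that cuts out $U(V[g])_{n-m}^{-m-1/T}$; once a workable spanning set of the latter is pinned down, the remaining steps are essentially mechanical.
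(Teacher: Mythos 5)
Your proposal follows essentially the same route as the paper: surjectivity via reduction of monomials in $U(V[g])_{n-m}$ to single modes $J_{m-n}(u)$ (the paper's Lemma 5.1), the (bi)module homomorphism property via the congruence $J_{m-n}(u*_{g,m,p}^{n}v)\equiv J_{p-n}(u)J_{m-p}(v)$ (Lemma 5.2, imported from Dong--Jiang), and injectivity by acting on $\Omega_m(M)$ for all weak $g$-twisted modules $M$ and invoking the characterization $O_{g,n,m}(V)=\mathcal{O}_{g,n,m}(V)$. The only divergence is well-definedness, where the paper sidesteps your ``routine'' direct expansion of the generators of $O_{g,n,m}(V)$ (which is genuinely delicate for the $O''$ and $O'''$ parts) by instead turning $\bigoplus_{n}U(V[g])_{n-m}/U(V[g])_{n-m}^{-m-1/T}$ into an admissible $g$-twisted module $\mathbb{M}$ and applying $O_{g,n,m}(V)=\mathcal{O}_{g,n,m}(V)$ to the vector $\mathbf{1}(0)\in\mathbb{M}(m)$ --- essentially the module-theoretic alternative you mention in passing.
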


It was shown by Han and Xiao \cite{HX2} that the associative algebra $A_{g,n}(V)$ is isomorphic to $U(V[g])_0 / U(V[g])_0^{-n - 1/T}$, and by the author together with Han and Xiao \cite{HXX1} that the bimodule $A_{g,n,m}(V)$ is isomorphic to $U(V[g])_{n-m} / U(V[g])_{n-m}^{-m - 1/T}$. In this paper, we present a unified and streamlined proof of these isomorphisms using the method developed in \cite{Shun3, Shun2}.

The untwisted case of these results for vertex operator algebras can be found in \cite{DLM1,DJ1,H1,H3}; their extensions to the setting of vertex operator superalgebras are discussed in \cite{KW1,JJ1,Shun2,Shun3}.

The paper is organized as follows.  
In Section~2, we recall the definitions of vertex operator algebras, weak $g$-twisted modules, and admissible $g$-twisted modules.  
In Section~3, we review the $A_{g,n}(V)$--$A_{g,m}(V)$-bimodules $A_{g,n,m}(V)$ and explain their representation-theoretic significance. In Section~4,  we recall the construction of the universal enveloping algebra  $U(V[g])$ of $V$ with respect to $g$.
In Section~5, we give the proof of Theorem~\ref{thm1.1}.

\section{Basics}

We recall definitions of the vertex operator algebras, weak $g$-twisted modules and admissible $g$-twisted modules in this
section.

\begin{defi}
A vertex operator algebra is a quadruple $(V, Y, \mathbf{1}, \omega)$, where $V=\bigoplus_{n \in \mathbb{Z}} V_n$ is a $\mathbb{Z}$-graded vector space with $\operatorname{dim} V_n<\infty$ for all $n \in \mathbb{Z}, V_n=0$ for $n \ll 0, \mathbf{1} \in V_0$, $\omega \in V_2$ and $Y$ is a linear map from $V$ to $ (\operatorname{End}V)\left[\left[z, z^{-1}\right]\right]$ sending $v \in V$ to $Y(v, z)=$ $\sum_{n \in \mathbb{Z}} v_n z^{-n-1}$, such that the following conditions are assumed for any $u, v \in V$ :

{\rm (1) }$Y(\mathbf{1}, z)=\operatorname{id}_V$ and $u_n \mathbf{1}=\delta_{n,-1} u$ for $n \geq-1$.

{\rm(2)} $u_n v=0$ for $n \gg 0$.

{\rm(3)} For any $l, m, n \in \mathbb{Z}$,  the Jacobi identity hold: 
$$
\sum_{i \geq 0}(-1)^i\binom{l}{i}\left(u_{m+l-i} v_{n+i}-(-1)^l v_{n+l-i} u_{m+i}\right)=\sum_{i \geq 0}\binom{m}{i}\left(u_{l+i} v\right)_{m+n-i}.
$$

{\rm(4)} The Virasoro algebra relations hold: $[L(m), L(n)]=(m-n) L(m+n)+\frac{m^3-m}{12} c_V$ for $m, n \in \mathbb{Z}$, where $c_V \in \mathbb{C}$ and $L(m)=\omega_{m+1}$ for $m \in \mathbb{Z} ;\left.L(0)\right|_{V_m}=m \mathrm{id}_{V_m}$ for $m \in \mathbb{Z}$ and $Y(L(-1) w, z)=\frac{d}{d z} Y(w, z)$ for $w \in V$.
\end{defi}

\begin{defi}
    Let $(V, Y, \mathbf{1}, \omega)$ be a vertex operator algebra. A linear isomorphism $g$ of $V$ is called an automorphism of $V$ if
$$
g(\mathbf{1})=\mathbf{1}, g(\omega)=\omega \quad \text { and } \quad g(Y(u, z) v)=Y(g(u), z) g(v) \quad \text { for } u, v \in V.
$$
\end{defi}
Let $(V,Y,\mathbf{1},\omega)$ be a vertex operator algebra.  For any $n$, elements in $V_n$ are called homogeneous  and if $u\in V_n$  we define $\operatorname{wt}u=n$.  So when  $\operatorname{wt}u$ appears we always assume that $u$ is homogeneous. We fix $g$ to be an automorphism of $V$ of finite order $T$. Then $V$ has the following decomposition with respect to $g$ :
$$
V=\bigoplus_{r=0}^{T-1} V^r, \quad \text { where } V^r=\left\{v \in V \mid g v=e^{-2 \pi \sqrt{-1} r / T} v\right\}.
$$

\begin{defi}
 A weak $g$-twisted $V$-module  is a vector space $M$  equipped with a linear map $Y_M(\cdot, z)$ from $V$ to $(\operatorname{End} M)\left[\left[z^{1 / T}, z^{-1 / T}\right]\right]$ sending $u \in V^r(0 \leq r \leq T-1)$ to $Y_M(u, z)=\sum_{n \in r / T+\mathbb{Z}} u_n z^{-n-1}$ such that the following conditions hold:
 
{\rm (1) }$Y_M(\mathbf{1}, z)=\operatorname{id}_M$.

{\rm(2)} For $u \in V^r$ and $w \in M, u_{r / T+n} w=0$ for $n \gg 0$.

{\rm(3)} For any $u \in V^r, v \in V^s$ and $l \in \mathbb{Z}, m \in r / T+\mathbb{Z}, n \in s / T+\mathbb{Z}$, the Jacobi identity hold:
$$
  \sum_{i \geq 0}(-1)^i\binom{l}{i}\left(u_{m+l-i} v_{n+i}-(-1)^l v_{n+l-i} u_{m+i}\right)=\sum_{i \geq 0}\binom{m}{i}\left(u_{l+i} v\right)_{m+n-i} . 
$$
\end{defi}

\begin{defi}
 An admissible $g$-twisted $V$-module is a $(1/T) \N$-graded weak $g$-twisted $V$-module $
M=\bigoplus_{n \in (1/T) \N} M(n)
$
satisfying
$
v_m M(n) \subseteq M(n+\operatorname{wt} v-m-1)
$
for any $v \in V, m \in (1/T) \mathbb{Z}$ and $ n \in (1/T) \mathbb{N}$.
\end{defi}

\section{$A_{g,n}(V)\!-\!A_{g,m}(V)$-Bimodules $A_{g,n,m}(V)$}

For any weak $g$-twisted \( V \)-module \( M \) and \( n \in (1/T)\mathbb{Z} \), we define a linear map \( o_{n}(\cdot): V \to \operatorname{End} M \) by
\(
o_{n}(v) = v_{\operatorname{wt}v - 1 + n},
\) and set $o(\cdot)=o_0(\cdot)$. For \( n, m \in (1/T)\mathbb{N}  \), define
\begin{align*}
   &\Omega_{n}(M) = \left\{ w \in M \mid o_{n+i}(v)w = 0 \text{ for all } v \in V \text{ and } 0 < i \in (1/T)\mathbb{Z} \right\},\\
   &\mathcal{O}_{g,n, m}(V) = \left\{ u \in V \mid o_{m-n}(u)|_{\Omega_m(M)} = 0 \text{ for all weak }g\text{-twisted } V \text{-modules } M \right\}.
\end{align*}
And set \(\mathcal{O}_{g,n}(V)=\mathcal{O}_{g,n,n}(V)\).

For any $n \in(1 / T) \N,$ set $\bar{n}=(n-\lfloor n\rfloor)T$, where $\lfloor\cdot\rfloor$ is
the floor function.  For $0\leq r\leq T-1$,  define $\delta_{i}(r)=1$ if $r \leq i \leq T-1$ and $\delta_{i}(r)=0$ if $i<r$; and set $\delta_{i}(T)=0$ for $0\le i\le T-1$. 

For  $u \in V^r, v\in V$ and $m,n,p \in (1/T)\mathbb{N}$, set $d=\lfloor m\rfloor+\lfloor n\rfloor-\lfloor p\rfloor-1+\delta_{\bar{m}}(r)+\delta_{\bar{n}}(T-r)$,  define the product $ *_{g, m, p}^{n}$ on $V$ as follows:
$$
u *_{g, m, p}^{n} v=\sum_{i=0}^{\lfloor p\rfloor}(-1)^{i}
\binom{d+i}{i}
 \operatorname{Res}_{z} \frac{(1+z)^{\operatorname{wt} u-1+\lfloor m\rfloor+\delta_{\bar{m}}(r)+r / T}}{z^{d+1+i}}
Y(u, z) v,$$
if   $\bar{p}-\bar{n}\equiv r\ \operatorname{mod}\ T$; and
$u *_{g, m, p}^{n} v=0$ otherwise. Set  $\bar{*}_{g, m}^{n}= *_{g, m, n}^{n}$,   $ *_{g, m}^{n}=*_{g, m, m}^{n}$ and $*_{g,n}=*_{g, n, n}^{n}$. 
Define
$$O_{g, n, m}^{\prime}(V)={\rm span}\{u \circ_{g, m}^{n} v\mid u,v\in V\}+L_{n,m}(V),$$ where $L_{n,m}(V)={\rm span}\{(L(-1)+L(0)+m-n)u\mid u\in V\}$ and  for  $u \in V^{r}$, $v \in V,$
$$
u \circ_{g, m}^{n} v=\operatorname{Res}_{z} \frac{(1+z)^{\operatorname{wt} u-1+\delta_{\bar{m}}(r)+\lfloor m\rfloor+r / T}}{z^{\lfloor m\rfloor+\lfloor n\rfloor+\delta_{\bar{m}}(r)+\delta_{\bar{n}}(T-r)+1}} Y(u, z) v.
$$
Set $O_{g, n}(V)=O_{g, n, n}^{\prime}(V)$, $A_{g,n}(V)=V/O_{g,n}(V)$.
For any $a, b, c, u \in V$ and any $p_{1}, p_{2}, p_{3} \in (1/T)\mathbb{N}  $, let $O_{g, n, m}^{\prime \prime}(V)$ be the linear span of
\begin{equation*}
u *_{g, m, p_{3}}^{n}\big((a *_{g, p_{1}, p_{2}}^{p_{3}} b) *_{g, m, p_{1}}^{p_{3}} c-a *_{g, m, p_{2}}^{p_{3}}(b *_{g, m, p_{1}}^{p_{2}} c)\big).
\end{equation*}
Set
$
O_{g, n, m}^{\prime \prime \prime}(V)=\sum_{p_{1}, p_{2} \in (1/T) \N}\left(V *_{g, p_{1}, p_{2}}^{n} O_{g, p_{2}, p_{1}}^{\prime}(V)\right) *_{g, m, p_{1}}^{n} V
$, 
$
O_{g, n, m}(V)=O_{g, n, m}^{\prime}(V)+O_{g, n, m}^{\prime \prime}(V)+O_{g, n, m}^{\prime \prime \prime}(V)$ and $A_{g,n,m}(V)=V/O_{g,n,m}(V).
$ 

From \cite{DLM3} and \cite{DJ2}, we have:
\begin{thm} \label{thm3.1}
    \text{\rm (1)} The product \( *_{g,n} \) induces an associative algebra structure on \( A_{g,n}(V) \) with the identity element given by \( \mathbf{1} + O_{g,n}(V) \).
    
    \text{\rm (2)} For a weak $g$-twisted \( V \)-module \( M \), \( \Omega_{n}(M) \) is an \( A_{g,n}(V) \)-module induced by the map \( a \mapsto o(a) \) for \( a \in V^{{0}} \). If \( M = \bigoplus_{k \in (1/T) \mathbb{N}} M(k) \) is an admissible $g$-twisted  \( V \)-module, then \( \bigoplus_{0 \leq k \in (1/T) \mathbb{Z} \leq n} M(k) \subseteq \Omega_{n}(M) \), and \( M(k) \) is an \( A_{g,n}(V) \)-module  for \( 0 \leq k \in (1/T) \mathbb{Z} \leq n \).

    {\rm (3)}  For any $A_{g,n}(V)$-module $U$, there exists
	an admissible  $g$-twisted  $V$-module $\bar{M}(U)$ such that $\bar{M}(U)(n) = U$.

    {\rm(4)} \( A_{g,n, m}(V) \) is an \( A_{g,n}(V)\!-\!A_{g,m}(V) \)-bimodule for \( m, n \in (1/T) \mathbb{N}  \), where the left and right actions of \( A_{g,n}(V) \) and \( A_{g,m}(V) \) are induced by \( \bar{*}^n_{g,m} \) and \( *^n_{g,m} \), respectively.
\end{thm}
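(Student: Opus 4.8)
The plan is to reduce parts (1), (2) and (4) to a common package of \emph{basic identities} among the products $*^{n}_{g,m,p}$, the residue operations $\circ^{n}_{g,m}$ and the spaces $L_{n,m}(V)$, and to handle part (3) separately by an explicit universal construction; this is the content of Dong--Li--Mason \cite{DLM3} and Dong--Jiang \cite{DJ2}. Each basic identity is proved the same mechanical way: for $u\in V^{r}$ one substitutes $Y_{M}(u,z)=\sum_{k\in r/T+\Z}u_{k}z^{-k-1}$ into the twisted Jacobi identity, multiplies by a rational test function of the form $(1+z)^{\alpha}z^{-\beta}$, applies $\Res_{z}$, and rearranges the resulting binomial sums — the congruence $\bar p-\bar n\equiv r\ (\operatorname{mod}\ T)$ being exactly what keeps the relevant exponents integral. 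The two instances that do the work are: (i) $O'_{g,n,m}(V)\subseteq\mathcal{O}_{g,n,m}(V)$, i.e. every generator $u\circ^{n}_{g,m}v$ and every element of $L_{n,m}(V)$ is annihilated by $o_{m-n}(\cdot)$ on $\Omega_{m}(M)$ for all weak $g$-twisted $M$ (using $o_{m+i}(\cdot)\Omega_{m}(M)=0$ for $i>0$); and (ii) a product-to-composition formula expressing $o_{m-n}(a*^{n}_{g,m,p}b)$ on $\Omega_{m}(M)$ through $o_{m-p}(a)$ and $o_{p-n}(b)$, from which the analogous containments for $O''_{g,n,m}(V)$ and $O'''_{g,n,m}(V)$ follow as well.

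\emph{Parts (1) and (2).} Specializing $m=n$, identity (i) gives $o(u)|_{\Omega_{n}(M)}=0$ for $u\in O_{g,n}(V)$ and (ii) gives $o(a*_{g,n}b)=o(a)o(b)$ on $\Omega_{n}(M)$, so $a\mapsto o(a)$ induces a representation of $(V,*_{g,n})$ on $\Omega_{n}(M)$ annihilating $O_{g,n}(V)$; the formula for $*_{g,n}$ with $\one$ on the left collapses to the identity, while $v*_{g,n}\one\equiv v$ follows from $e^{zL(-1)}$-bookkeeping using $L_{n,n}(V)\subseteq O_{g,n}(V)$. That $*_{g,n}$ descends to an associative product on $A_{g,n}(V)=V/O_{g,n}(V)$ is then a direct residue computation: $V*_{g,n}O_{g,n}(V)+O_{g,n}(V)*_{g,n}V\subseteq O_{g,n}(V)$, and associators of three elements land in $O_{g,n}(V)$. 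For (2), the $A_{g,n}(V)$-module structure on $\Omega_{n}(M)$ is immediate from (i) and (ii); for admissible $M$, the grading bound $v_{k}M(j)\subseteq M(j+\wt v-k-1)$ makes $o_{n+i}(v)$ send $M(j)$ into $M(j-n-i)=0$ when $j\le n$ and $i>0$, so $\bigoplus_{0\le j\le n}M(j)\subseteq\Omega_{n}(M)$, and since $o(v)$ preserves each $M(j)$, every such $M(j)$ is an $A_{g,n}(V)$-submodule.

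\emph{Part (4).} The three summands of $O_{g,n,m}(V)$ are tailored to what must be checked: $O'_{g,n,m}(V)$ encodes the module compatibility (i) and makes the left action $\bar{*}^{n}_{g,m}$ and the right action $*^{n}_{g,m}$ well defined on $V/O_{g,n,m}(V)$; $O''_{g,n,m}(V)$ imposes exactly the associativity constraints $(a*^{p_{3}}_{g,p_{1},p_{2}}b)*^{p_{3}}_{g,m,p_{1}}c\equiv a*^{p_{3}}_{g,m,p_{2}}(b*^{p_{2}}_{g,m,p_{1}}c)$ needed for the two actions to be associative and to commute; and $O'''_{g,n,m}(V)$ makes the span stable when an $O'_{g,p_{2},p_{1}}(V)$ is sandwiched between products. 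So one verifies, by residue manipulations of the above type, that each of $O'$, $O''$, $O'''$ is carried into $O_{g,n,m}(V)$ by both $V\,\bar{*}^{n}_{g,m}(-)$ and $(-)*^{n}_{g,m}V$; that $(u\,\bar{*}^{n}_{g,m}u')\,\bar{*}^{n}_{g,m}v\equiv u\,\bar{*}^{n}_{g,m}(u'\,\bar{*}^{n}_{g,m}v)$, $(v*^{n}_{g,m}w)*^{n}_{g,m}w'\equiv v*^{n}_{g,m}(w*^{n}_{g,m}w')$ and $(u\,\bar{*}^{n}_{g,m}v)*^{n}_{g,m}w\equiv u\,\bar{*}^{n}_{g,m}(v*^{n}_{g,m}w)$ hold modulo $O_{g,n,m}(V)$; and that $\one$ acts as the identity on either side.

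\emph{The main obstacle is part (3).} It is the only part not reducible to Jacobi-identity bookkeeping: a module must actually be produced. I would construct $\bar M(U)$ as a quotient of a ``tensor-type'' $(1/T)\N$-graded space $\bigoplus_{k\in(1/T)\N}\bigl(M_{k}\otimes U\bigr)$, where $M_{k}$ records formal strings of modes of $V$ lowering degree from $n$ to $n-k$ and $V$ acts on $U$ in degree $n$ through $A_{g,n}(V)$, imposing precisely the relations forced by the twisted Jacobi identity and $Y(L(-1)w,z)=\tfrac{d}{dz}Y(w,z)$, so that $\bar M(U)$ is the universal $(1/T)\N$-graded weak $g$-twisted $V$-module generated by $U$ in degree $n$. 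Surjectivity of $U\to\bar M(U)(n)$ is clear; the delicate point is injectivity — that the relations do not collapse degree $n$ — together with the attendant circularity between (1)--(2) and (3). I would resolve it as in \cite{DLM3}: first run the construction for $U$ free, use a spanning/PBW-type argument to bound $\bar M(U)(n)$ from above while part (2) bounds it from below, conclude the isomorphism there, and only afterwards deduce the general statements. This is where the genuine representation-theoretic content of \cite{DLM3,DJ2} enters.
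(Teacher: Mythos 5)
The paper does not actually prove this theorem: it is imported verbatim from \cite{DLM3} and \cite{DJ2}, so there is no internal proof to compare against. Your outline correctly reconstructs the architecture of those cited proofs --- the residue/Jacobi-identity package giving $o(a*_{g,n}b)=o(a)o(b)$ on $\Omega_n(M)$ and the annihilation of $O'_{g,n,m}(V)$ for parts (1), (2), (4), and the universal $(1/T)\N$-graded module generated by $U$ in degree $n$ for part (3) --- and you rightly identify part (3), specifically the non-collapse of the degree-$n$ piece, as the only step with genuine representation-theoretic content. What you have written is a proof plan rather than a proof (none of the residue identities or the well-definedness checks for the bimodule actions are carried out, and the injectivity of $U\to\bar M(U)(n)$ is only gestured at), but since the paper itself defers all of this to \cite{DLM3,DJ2}, your level of detail matches its; the approach is the standard one and contains no wrong turns.
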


Set
$
\mathcal{M}=\bigoplus_{n \in (1/T)\mathbb{N} } A_{g, n, m}(V),
$
then $\mathcal{M}$ is $(1/T) \mathbb{N} $-graded such that $\mathcal{M}(n)=A_{g, n, m}(V)  $. For $u \in$ $V^{r}, p\in r/T+\mathbb{Z}$  and   $n \in (1/T) \mathbb{N},$ define an operator $u_{p}$ from $\mathcal{M}(n)$ to $\mathcal{M}(n+\operatorname{wt} u-p-1)$ by
\[
u_{p}\big(v + O_{g,n,m}(V)\big) =
\begin{cases}
u *_{g, m, n}^{\operatorname{wt}u - p - 1 + n} v + O_{g,\operatorname{wt}u - p - 1 + n,\,m}(V),
& \text{if } \operatorname{wt} u - 1 - p + n \geq 0, \\
0,
& \text{if } \operatorname{wt} u - 1 - p + n < 0,
\end{cases}
\]
for $v \in V$. Then we form a generating function
    $Y_{\mathcal{M}}(u,z)=\sum_{p\in(1/T)\Z} u_pz^{-p-1}$. And \( \mathcal{M} \) is an admissible $g$-twisted \( V \)-module by \cite[Theorem 5.12]{DJ2}.

\begin{thm}\label{thm3.2}
For any \( n, m \in (1/T) \mathbb{N}  \), $O_{g,n}(V) = \mathcal{O}_{g,n}(V)$ and $ O_{g,n, m}(V) = \mathcal{O}_{g,n, m}(V).$
\end{thm}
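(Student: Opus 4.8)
\emph{Overall plan.} I would prove the two equalities together, treating the algebra statement $O_{g,n}(V)=\mathcal{O}_{g,n}(V)$ as the specialization $m=n$ of the bimodule statement. In each case the argument splits into the two inclusions: $O_{g,n,m}(V)\subseteq\mathcal{O}_{g,n,m}(V)$, which is a verification on generators, and $\mathcal{O}_{g,n,m}(V)\subseteq O_{g,n,m}(V)$, which I would obtain by feeding the operators $o_{m-n}(\cdot)$ into a single ``universal'' module — this is where the method of \cite{Shun3,Shun2} enters.

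\emph{The inclusion $O_{g,n,m}(V)\subseteq\mathcal{O}_{g,n,m}(V)$.} Fix a weak $g$-twisted $V$-module $M$ and $w\in\Omega_m(M)$, and run through the generators. For $x=(L(-1)+L(0)+m-n)u$ with $u$ homogeneous, the $L(-1)$-derivative property of $Y_M$ gives $o_{m-n}(L(-1)u)=-(\operatorname{wt}u+m-n)\,o_{m-n}(u)$ and $o_{m-n}(L(0)u)=\operatorname{wt}u\cdot o_{m-n}(u)$, so $o_{m-n}(x)=0$ already as an operator on $M$. For $x=u\circ^n_{g,m}v$ I would expand $o_{m-n}(x)w$ by the iterate (``associativity'') identity as a finite combination of terms $o_p(u)o_q(v)w$ and $o_q(v)o_p(u)w$; the exponents $\operatorname{wt}u-1+\delta_{\bar m}(r)+\lfloor m\rfloor+r/T$ and $\lfloor m\rfloor+\lfloor n\rfloor+\delta_{\bar m}(r)+\delta_{\bar n}(T-r)+1$ in the definition of $\circ^n_{g,m}$ are chosen precisely so that every resulting term carries an $o_{m+i}(\cdot)$ with $i>0$ hitting $w$, hence vanishes because $w\in\Omega_m(M)$. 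The $O''_{g,n,m}(V)$-generators are associativity defects of the various $*$-products; the same iterate identity shows that on the relevant $\Omega$-spaces the operators attached to these products compose associatively, so the defects act by $0$, and prepending $u*^n_{g,m,p_3}(-)$ preserves this. Finally each $O'''_{g,n,m}(V)$-generator contains a factor from some $O'_{g,p_2,p_1}(V)$, whose operator already annihilates the relevant $\Omega$-space, and the vanishing propagates through the remaining $*$-products. These are the computations underlying the constructions of \cite{DLM3} and \cite{DJ2}, so I would either cite them or reproduce the short versions.

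\emph{The inclusion $\mathcal{O}_{g,n,m}(V)\subseteq O_{g,n,m}(V)$.} Let $u\in\mathcal{O}_{g,n,m}(V)$ and write $u=\sum_r u^r$ with $u^r\in V^r$. First dispose of the components with $r\not\equiv\bar m-\bar n\pmod T$: using $Y(u^r,z)\mathbf 1=e^{zL(-1)}u^r$, the congruence $L(-1)v\equiv-(\operatorname{wt}v+m-n)v$ modulo $L_{n,m}(V)$, and Vandermonde's identity, a short computation gives
\[
u^r\circ^n_{g,m}\mathbf 1\ \equiv\ \binom{\lfloor n\rfloor-1+\delta_{\bar m}(r)+(r-\bar m+\bar n)/T}{\lfloor m\rfloor+\lfloor n\rfloor+\delta_{\bar m}(r)+\delta_{\bar n}(T-r)}\,u^r\pmod{O'_{g,n,m}(V)},
\]
and the binomial coefficient on the right is nonzero exactly when $(r-\bar m+\bar n)/T\notin\Z$, i.e.\ when $r\not\equiv\bar m-\bar n\pmod T$; since the left-hand side lies in $O'_{g,n,m}(V)$, so do all such $u^r$, and we may assume $u\in V^{r_0}$ with $r_0\equiv\bar m-\bar n\pmod T$. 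Now test against $\mathcal{M}=\bigoplus_k A_{g,k,m}(V)$, which is an admissible $g$-twisted $V$-module with $\mathcal{M}(k)=A_{g,k,m}(V)$ by \cite[Theorem 5.12]{DJ2}. By Theorem~\ref{thm3.1}(2) we have $\mathcal{M}(m)\subseteq\Omega_m(\mathcal{M})$, so $o_{m-n}(u)\big(\mathbf 1+O_{g,m,m}(V)\big)=0$ in $\mathcal{M}(n)=A_{g,n,m}(V)$; unwinding the action on $\mathcal{M}$, this reads $u*^n_{g,m}\mathbf 1+O_{g,n,m}(V)=0$, and since the identity $\mathbf 1+O_{g,m}(V)$ of $A_{g,m}(V)$ acts trivially on the bimodule $A_{g,n,m}(V)$ (Theorem~\ref{thm3.1}(4)), we get $u*^n_{g,m}\mathbf 1\equiv u\pmod{O_{g,n,m}(V)}$. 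Hence $u\in O_{g,n,m}(V)$. For the algebra case $m=n$ I would run the same argument with $\mathcal{M}$ replaced by $\bar M(A_{g,n}(V))$ from Theorem~\ref{thm3.1}(3), using that its degree-$n$ component is the regular $A_{g,n}(V)$-module, on which $o_0(u)(\mathbf 1+O_{g,n}(V))=u+O_{g,n}(V)$ for $u\in V^0$.

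\emph{Main obstacle.} I expect the first inclusion to be the only substantial part: one must check that the rather intricate exponents in $\circ^n_{g,m}$ and in the iterated products $*$ are exactly those for which every term in the iterate expansion falls into the annihilator of $\Omega_m(M)$, and the bookkeeping for the $O''$- and $O'''$-generators is the longest piece. The reverse inclusion is short once Theorem~\ref{thm3.1} is in hand — the universal module $\mathcal{M}$ (respectively $\bar M(A_{g,n}(V))$) serves only to convert the hypothesis ``$o_{m-n}(u)$ annihilates every $\Omega_m(M)$'' into the concrete identity $u*^n_{g,m}\mathbf 1\equiv u$.
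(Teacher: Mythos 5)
Your proposal is correct and follows essentially the same route as the paper: the inclusion $O_{g,n,m}(V)\subseteq\mathcal{O}_{g,n,m}(V)$ is delegated to \cite[Lemma 5.2]{DJ2}, and the reverse inclusions are obtained by evaluating $o_{m-n}(u)$ on $\mathbf 1+O_{g,m,m}(V)\in\Omega_m(\mathcal M)$ (respectively $\mathbf 1+O_{g,n}(V)\in\Omega_n(\bar M(A_{g,n}(V)))$ for the algebra case) and invoking unitality from Theorem~\ref{thm3.1}. Your preliminary computation disposing of the components $u^r$ with $r\not\equiv\bar m-\bar n\pmod T$ is redundant, since the identity $u*_{g,m}^{n}\mathbf 1\equiv u\pmod{O_{g,n,m}(V)}$ from Theorem~\ref{thm3.1}(4) already forces those components into $O_{g,n,m}(V)$.
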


\begin{proof}
By Theorem~\ref{thm3.1}(2), we have
$ A_{g,m, m}(V) = \mathcal{M}(m) \subseteq \Omega_{m}(\mathcal{M}) .$ For any \( u \in \mathcal{O}_{g,n, m}(V) \), by the definition of \( \mathcal{O}_{g,n, m}(V) \) and Theorem \ref{thm3.1}(4), we have
\[
0 = o_{m-n}(u) \left( \mathbf{1} + O_{g,m, m}(V) \right) = u *_{g,m}^n \mathbf{1} + O_{g,n, m}(V) = u + O_{g,n, m}(V),
\]
which implies \( \mathcal{O}_{g,n, m}(V) \subseteq O_{g,n, m}(V) \). By  \cite[Lemma 5.2]{DJ2}, $
O_{g,n, m}(V) \subseteq \mathcal{O}_{g,n, m}(V).
$ Thus \( \mathcal{O}_{g,n, m}(V) = O_{g,n, m}(V) \).
Consider the admissible $g$-twisted $V$-module $\bar{M}(A_{g,n}(V))$ from Theorem \ref{thm3.1}(3), so $\bar{M}(A_{g,n}(V))(n) = A_{g,n}(V)\subseteq \Omega_n(\bar{M}(A_{g,n}(V)))$ by Theorem \ref{thm3.1}(2). For any $u \in \mathcal{O}_{g,n}(V)$ , we have
$$
0 = o(u)(\mathbf{1} + O_{g,n}(V)) = u *_{g,n} \mathbf{1} + O_{g,n}(V) = u + O_{g,n}(V),
$$
which implies $\mathcal{O}_{g,n}(V) \subseteq O_{g,n}(V)$, then \( O_{g,n}(V) = \mathcal{O}_{g,n}(V) \).
\end{proof}

\section{Universal enveloping algebra $U(V[g])$ of $V$ with respect to $g$}
In this section, we shall recall the construction of the universal enveloping algebra  $U(V[g])$ of $V$ with respect to $g$ from~\cite{HX2}.
Recall from~\cite{DLM2}  the Lie algebra
$$\hat{V}[g]=\mathcal{L}(V, g) / D \mathcal{L}(V, g),$$
where $\mathcal{L}(V, g)=\bigoplus_{r=0}^{T-1} V^{r} \otimes \mathbb{C} t^{\frac{r}{T}}\left[t, t^{-1}\right]$ and $D=L(-1) \otimes \operatorname{id}_V+\operatorname{id}_V \otimes \frac{d}{dt} .$ Denote by $u(m)$ the
image of $u \otimes t^{m}$ in $\hat{V}[g] .$ Then the Lie bracket on $\hat{V}[g]$ is given by
$$
\left[u(m+{r}/{T}), v(n+{s}/{T})\right]=\sum_{i=0}^{\infty}\dbinom{m+{r}/{T}}{i}(u_{i} v)(m+n+{(r+s)}/{T}-i)
$$
for $u \in V^{r}, v \in V^{s}$ and $m, n \in \mathbb{Z}$. If we define the degree of $u(m)$ to be $\operatorname{wt} u-m-1,$ then $\hat{V}[g]$ is a $(1 / T) \mathbb{Z}$-graded Lie algebra, i.e., $\hat{V}[g]=\bigoplus_{m \in(1 / T) \mathbb{Z}} \hat{V}[g]_{m}$ and $\left[\hat{V}[g]_{i}, \hat{V}[g]_{j}\right]\subseteq\hat{V}[g]_{i+j}$
for any $i, j \in(1 / T) \mathbb{Z} .$
Let $U(\hat{V}[g])$  be the universal enveloping algebra of the Lie algebra $\hat{V}[g]$. Then the $(1 / T) \mathbb{Z}$-grading on $\hat{V}[g]$ induces a $(1 / T) \mathbb{Z}$-grading on $U(\hat{V}[g])=\bigoplus_{m \in(1 / T) \mathbb{Z}} U(\hat{V}[g])_{m} .$ Set
$$
U(\hat{V}[g])_{m}^{k}=\sum_{ k\geq  i \in(1 / T) \mathbb{Z}} U(\hat{V}[g])_{m-i} U(\hat{V}[g])_{i}
$$
for $ 0> k \in(1 / T) \mathbb{Z}$ and $U(\hat{V}[g])_{m}^{0}=U(\hat{V}[g])_{m}.$ Then
$
U(\hat{V}[g])_{m}^{k} \subseteq U(\hat{V}[g])_{m}^{k+1/T}
$
and
$$
\bigcap_{k \in-(1 / T) \mathbb{N}} U(\hat{V}[g])_{m}^{k}=0, \quad \bigcup_{k \in-(1 / T) \mathbb{N}} U(\hat{V}[g])_{m}^{k}=U(\hat{V}[g])_{m}.
$$
Thus $\left\{U(\hat{V}[g])_{m}^{k} \mid k \in-(1 / T) \mathbb{N}\right\}$ forms a fundamental neighborhood system of $U(\hat{V}[g])_{m}$. Let $\widetilde{U}(\hat{V}[g])_{m}$ be the completion of $U(\hat{V}[g])_{m}$, then
$\widetilde{U}(\hat{V}[g]):=\bigoplus_{m \in(1 / T) \mathbb{Z}} \widetilde{U}(\hat{V}[g])_{m}$ is a complete topological ring which allows infinite sums in it.

For each $m \in(1 / T) \mathbb{Z},$ define a linear map $J_{m}(\cdot): V \rightarrow \hat{V}[g]$ sending
$u \in V^{r}$ to $u(\operatorname{wt} u+m-1)$ if $m \in r / T+\mathbb{Z}$  and zero otherwise.
\begin{defi}
		The universal enveloping algebra $U(V[g])$ of $V$ with respect to $g$ is the quotient of $\widetilde{U}(\hat{V}[g]) $ by the two-sided ideal generated by the relations:
		$
		\mathbf{1}(i)=\delta_{i,-1}  \text { for } i \in \mathbb{Z}$
		and
	\begin{align*}
	&\sum_{i \geq 0}(-1)^{i}\dbinom{l}{i}\left(J_{s-i}(u) J_{t+i}(v)-(-1)^{l} J_{l+t-i}(v) J_{s+i-l}(u)\right) \\
	=&\sum_{i \geq 0} \dbinom{s+\operatorname{wt}u-l-1}{i}
	J_{s+t}\left(u_{l+i} v\right)\quad\text{ for } u\in V^r, v \in V^{r^{\prime}}, l \in \mathbb{Z}, s \in \frac{r}{T}+\mathbb{Z}, t\in \frac{r^{\prime}}{T}+\mathbb{Z}. \notag
 	\end{align*}
\end{defi}

It is clear that $U(V[g])=\bigoplus_{m \in(1 / T) \mathbb{Z}} U(V[g])_{m}$ is  a $(1/T)\mathbb{Z}$-graded associative algebra. Set
$$
U({V}[g])_{m}^{k}=\sum_{k \geq  i \in(1 / T) \mathbb{Z}} U({V}[g])_{m-i} U({V}[g])_{i}
$$
for $0>k \in(1/T)\Z$. Then  $U(V[g])_{n-m} / U(V[g])_{n-m}^{-m-1/T}$ is a $U(V[g])_{0} / U(V[g])_{0}^{-n-1/T}\!-\!U(V[g])_{0} / U(V[g])_{0}^{-m-1/T}$-bimodule for any $n,m\in(1/T)\N$.

\begin{rmk} \label{Remark 4.2}
	\rm{(1)} From the construction of $U(V[g])$, any weak $g$-twisted $V$-module is naturally a $U(V[g])$-module with the action induced by the map $u(m) \mapsto u_{m}$ for any $u \in V^r$ and $m \in r/T+\mathbb{Z}$.

    {\rm(2)}  We shall continue to denote by $J_s(u)$ the image of the element $J_s(u) \in \widetilde{U}(\hat{V}[g])$ in $U(V[g])$ or its quotients.
\end{rmk}

\section{The proof of Theorem \ref{thm1.1}}

Before stating the main result, we first need to present two lemmas.
\begin{lem}\label{lem8.3}
Let $n, m \in (1/T)\N$. For any element
\[
w = \sum J_{m_1}(u_1) \cdots J_{m_k}(u_k) \in U(V[g])_{n - m} \big/ U(V[g])_{n - m}^{-m - 1/T},
\]
where $u_j \in V$ and $m_j \in (1/T)\Z$, there exists $u_{k+1} \in V$ such that
$
w = J_{m - n}(u_{k+1}).
$
\end{lem}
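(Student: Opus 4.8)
The plan is to reduce an arbitrary product of $J$-operators to a single one, working modulo $U(V[g])_{n-m}^{-m-1/T}$, by induction on the length $k$ of the product. The base cases $k=0$ (where $w$ is a scalar multiple of the identity, handled by the relation $\mathbf{1}(-1) = \id$, so $w = J_{m-n}(\lambda\mathbf{1})$ for suitable $\lambda$, or $w=0$ if $m\neq n$) and $k=1$ are immediate. For the inductive step, suppose the claim holds for products of length less than $k$; it suffices to treat a single product $J_{m_1}(u_1)\cdots J_{m_k}(u_k)$ with $\sum_j(\wt u_j - m_j - 1) = n-m$, i.e.\ total degree $n-m$.

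The key device is the filtration $U(V[g])_{n-m}^{\le j}$: I would first argue that one may assume each partial tail $J_{m_{j}}(u_{j})\cdots J_{m_k}(u_k)$ has degree $\ge -m$, since any factor making a tail drop to degree $-m-1/T$ or below pushes the whole product into $U(V[g])_{n-m}^{-m-1/T}$ and hence contributes $0$. Concretely, write $w = J_{m_1}(u_1)\cdot w'$ where $w' = J_{m_2}(u_2)\cdots J_{m_k}(u_k)$ has degree $\ge -m$ (otherwise $w\equiv 0$); by the inductive hypothesis applied to $w'$ viewed in an appropriate quotient $U(V[g])_{d}/U(V[g])_{d}^{-m-1/T}$, there is $u' \in V$ with $w' \equiv J_{-d}(u')$, where $d$ is the degree of $w'$ and $-d = m-n+(\wt u_1 - m_1 - 1)$. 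Thus it remains to show that a length-two product $J_{m_1}(u_1) J_{-d}(u')$ of total degree $n-m$, with the tail $J_{-d}(u')$ of degree $d\ge -m$, is congruent modulo $U(V[g])_{n-m}^{-m-1/T}$ to a single $J_{m-n}(u_{k+1})$.

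For this length-two reduction I would invoke the defining relations of $U(V[g])$: taking $l=0$ in the Jacobi-type relation gives
\[
J_{s}(u)J_{t}(v) - J_{t}(v)J_{s}(u) = \sum_{i\ge 0}\binom{s+\wt u - 1}{i} J_{s+t}(u_i v),
\]
so $J_{m_1}(u_1)J_{-d}(u')$ differs from $J_{-d}(u')J_{m_1}(u_1)$ by $\sum_{i\ge0}\binom{\cdots}{i}J_{m-n}\big((u_1)_i u'\big)$, a finite sum of single $J_{m-n}$'s (collapsible to one using linearity of $J_{m-n}$). For the swapped term $J_{-d}(u')J_{m_1}(u_1)$: since $J_{-d}(u')$ has degree $d$ and the product has degree $n-m$, the factor $J_{m_1}(u_1)$ has degree $n-m-d \le m - d \le 2m$... here one must be careful. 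The actual mechanism is that $J_{m_1}(u_1)$ acting on the right can be slid using that, after reordering, either its degree is low enough that $J_{m_1}(u_1)\cdot(\text{identity part})$ lands in the ideal, or one uses the general relation with $l$ chosen so that the right-hand side is again a single $J$ while the left-hand side's second term $J_{l+t-i}(v)J_{s+i-l}(u)$ has a tail of sufficiently negative degree to be absorbed. The cleanest formulation: choose, in the length-$k$ setting directly, to move the \emph{lowest-degree} factor to the far right one step at a time, each commutation producing shorter products plus single $J$'s, and observe that once a factor is at the far right with the remaining prefix of degree $> n-m$ it would force the prefix's own tail below $-n-1/T$ relative to its target — the bookkeeping here parallels \cite[Lemma 5.2]{DJ2} and the arguments in \cite{Shun3, Shun2}.

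The main obstacle is precisely this degree bookkeeping: one must verify that every ``bad'' intermediate term — products whose tails dip below the cutoff, or whose prefixes exceed degree $n-m$ with no room left — genuinely lies in $U(V[g])_{n-m}^{-m-1/T}$, using the two defining identities $\bigcap_k U(\hat V[g])_m^k = 0$-type structure and the explicit form of $U(V[g])_m^k$ as $\sum_{i\le k} U_{m-i}U_i$. I expect the induction to go through once it is phrased as: \emph{modulo the ideal, $U(V[g])_{n-m}$ is spanned by $\{J_{m-n}(u): u\in V\}$}, proved by repeatedly applying the $l=0$ relation to push all but the rightmost factor past each other and collapsing via linearity of $J_{m-n}$, with the filtration ensuring the error terms vanish.
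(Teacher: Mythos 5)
Your overall strategy is in the right spirit --- reduce to a short product and use the defining relations of $U(V[g])$ with a choice of $l$ that forces the reordered terms into the ideal --- and several of your observations are correct (a tail of degree $\le -m-1/T$ kills the whole product; applying the inductive hypothesis to the tail is legitimate since the lemma is stated for all $n$). But the central reduction is not actually carried out, and the device you lead with cannot work. The $l=0$ relation only says $J_s(u)J_t(v)=J_t(v)J_s(u)+\sum_{i\ge 0}\binom{s+\wt u-1}{i}J_{s+t}(u_iv)$; when both $s\le m$ and $t\le m$, neither ordering lies in $U(V[g])_{n-m}^{-m-1/T}$, so commuting just trades one irreducible length-two product for another and you loop. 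You do mention the correct alternative --- choosing $l$ so that every $J_{s+i-l}(u)$ in the swapped sum has index $>m$ --- but this forces $l=s-\lfloor m\rfloor-1$, which is typically negative, so $\binom{l}{i}\ne 0$ for \emph{all} $i\ge 0$ and the remaining sum $\sum_{i\ge 1}(-1)^i\binom{l}{i}J_{s-i}(u)J_{t+i}(v)$ is infinite. Your proposal gives no mechanism for handling it.

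This is exactly where the paper's proof differs from yours in an essential way: it inducts not on the length $k$ alone but on the lexicographic order of $(k,-m_k)$, where $m_k$ is the rightmost index. With $l=s-\lfloor m\rfloor-1$ the swapped terms have rightmost index $\lfloor m\rfloor+1+i>m$ and vanish, the terms $J_{s+t}(u_{l+i}v)$ have smaller length, and the surviving terms of the infinite sum, $J_{s-i}(u)J_{t+i}(v)$ with $t+i\le m$, are finitely many and have strictly larger rightmost index, hence are strictly smaller in the lexicographic order. Without this secondary induction parameter (or some equivalent well-founded measure on the rightmost index), your induction on length has no way to terminate on the length-two products, and the phrases ``I expect the induction to go through'' and ``the bookkeeping here parallels \cite{DJ2}'' are standing in for the actual content of the lemma. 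To repair the proposal, replace the induction on $k$ by the induction on $(k,-m_k)$ and verify the three kinds of terms as above.
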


\begin{proof}
We prove by induction on the lexicographical order of pairs $(k, -m_k)$ that for any monomial $J_{m_1}(u_1) \cdots J_{m_k}(u_k)$, there exists $u_{k+1} \in V$ such that
\[
J_{m_1}(u_1) \cdots J_{m_k}(u_k) \equiv J_{m - n}(u_{k+1}) \bmod U(V[g])_{n - m}^{-m - 1/T}.
\]
The claim is clear when $k = 1$ or when $m_k > m$. Suppose, for contradiction, that the statement fails for some monomial, and let $(l, -m_l)$ be a minimal counterexample with respect to lexicographical order. Write
$
s = m_{l-1}$,  $t = m_l$,  $u = u_{l-1}$,  $v = u_l,
$
and denote by $J(l-2)$ the product $J_{m_1}(u_1) \cdots J_{m_{l-2}}(u_{l-2})$. Using the defining relation of $U(V[g])$, we compute
\begin{align*}
&\,J(l-2) J_s(u) J_t(v)\\
=& -\sum_{k \geq 1} (-1)^k \binom{s - \xqz{m} - 1}{k} J(l-2) J_{s - k}(u) J_{t + k} (v) \\
& + \sum_{k \geq 0} (-1)^{k + s - \xqz{m} - 1} \binom{s - \xqz{m} - 1}{k} J(l-2) J_{s + t - \xqz{m} - k - 1}(v) J_{\xqz{m} + k + 1} (u) \\
& + \sum_{k \geq 0} \binom{\wt u  +\xqz{m} }{k}   J(l-2) J_{s + t}(u_{k+s-\xqz{m}-1} v).
\end{align*}
Modulo $U(V[g])_{n - m}^{-m - 1/T}$, the second term vanishes because $\xqz{m} + k + 1 > m$ for all $k \geq 0$. Thus,
\begin{align*}
&\,J(l-2) J_s(u) J_t(v)\\
\equiv & -\sum_{k \geq 1} (-1)^k \binom{s - \xqz{m} - 1}{k} J(l-2) J_{s - k}(u) J_{t + k} (v) \\
& + \sum_{k \geq 0} \binom{\wt u  +\xqz{m} }{k}   J(l-2) J_{s + t}(u_{k+s-\xqz{m}-1} v)\bmod U(V[g])_{n - m}^{-m - 1/T}.
\end{align*}
Each term on the right-hand side corresponds to a pair strictly smaller than $(l, -m_l)$ in lexicographical order: the first sum involves $(l, -m_l - k)$ with $k \geq 1$, and the second involves $(l - 1, -m_l - m_{l-1})$. This contradicts the minimality of $(l, -m_l)$, completing the proof.
\end{proof}

By \cite[Lemma 5.1]{DJ2}, we obtain the following result.

\begin{lem}\label{lem8.4}
Let $u, v \in V$ and $m, n, p \in (1/T)\N$. Then
\[
J_{m-n}\bigl(u *_{g,m,\,p}^{\,n} v\bigr) 
\equiv J_{p - n}(u)J_{m - p} (v)
\mod U(V[g])_{n - m}^{-m - 1/T}.
\]
\end{lem}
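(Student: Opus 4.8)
The plan is to reduce to homogeneous elements, discard the parity-degenerate cases, and then transport the identity of \cite[Lemma 5.1]{DJ2} into $U(V[g])$ by way of the defining relations, much as in the proof of Lemma~\ref{lem8.3}. By bilinearity I may assume $u\in V^r$ and $v\in V^s$. If $\bar{p}-\bar{n}\not\equiv r\pmod T$, then $u*_{g,m,p}^n v=0$ by definition and $J_{p-n}(u)=0$, so both sides vanish. If $\bar{p}-\bar{n}\equiv r\pmod T$ but $m-p\notin s/T+\Z$, then $J_{m-p}(v)=0$, while $m-n=(p-n)+(m-p)\notin(r+s)/T+\Z$ together with $u*_{g,m,p}^n v\in V^{(r+s)\bmod T}$ forces $J_{m-n}(u*_{g,m,p}^n v)=0$; again both sides vanish. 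So I may assume $p-n\in r/T+\Z$ and $m-p\in s/T+\Z$, and then $J_{p-n}(u)$, $J_{m-p}(v)$ and $J_{m-n}(u*_{g,m,p}^n v)$ all lie in $U(V[g])_{n-m}$, so the asserted statement is an honest congruence there.

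For the main step I would expand $J_{p-n}(u)J_{m-p}(v)$ by the defining relation of $U(V[g])$ with $s=p-n$ and $t=m-p$, choosing the free integer parameter $l$ --- exactly as in the proof of Lemma~\ref{lem8.3} --- so that in the resulting sum whose terms have $v$ to the left of $u$ every such $u$-factor sits at shift index $>m$, hence degree $\leq -m-1/T$; that part lies in $U(V[g])_{n-m}^{-m-1/T}$ and drops out modulo the ideal. What remains is a finite combination of the two-factor products $J_{p-n-i}(u)J_{m-p+i}(v)$ for $1\leq i\leq\lfloor p\rfloor$ together with terms $J_{m-n}(u_{l+i}v)$ carrying explicit binomial coefficients, and \cite[Lemma 5.1]{DJ2} --- the computational identity for the products $*_{g,m,p}^n$, which is a formal consequence of the twisted Jacobi identity and hence holds among the $J$'s as well --- reorganizes exactly this into $J_{m-n}(u*_{g,m,p}^n v)$. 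A convenient simplification en route is the identity $\sum_{i\geq 0}(-1)^i\binom{d+i}{i}z^{-d-1-i}=(1+z)^{-d-1}$, by which replacing the truncation $\sum_{i=0}^{\lfloor p\rfloor}$ in the definition of $*_{g,m,p}^n$ by $\sum_{i\geq 0}$ collapses the product to the single residue $\operatorname{Res}_z(1+z)^{\wt u-1+p-n}Y(u,z)v$, so that only the finitely many tail corrections must be controlled.

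The step I expect to be the main obstacle is precisely this last comparison: matching the binomial coefficients produced by iterating the $U(V[g])$-relations against the coefficients $(-1)^i\binom{d+i}{i}$ in the definition of $*_{g,m,p}^n$ --- a bare single use of the Jacobi relation already leaves discrepancy terms, so it is the specific shape of \cite[Lemma 5.1]{DJ2} that makes them cancel --- together with keeping track of the floor-function corrections $\delta_{\bar{m}}(r)$ and $\delta_{\bar{n}}(T-r)$, hidden inside $d$ and inside the exponent of $(1+z)$, that are needed to certify that every error term lands inside $U(V[g])_{n-m}^{-m-1/T}$.
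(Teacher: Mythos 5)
Your proposal is correct and follows essentially the same route as the paper, whose entire proof of this lemma is the citation of \cite[Lemma 5.1]{DJ2}: one transports that computation into $U(V[g])$ using the defining relations together with the fact that right factors $J_q(\cdot)$ with $q>m$ lie in $U(V[g])_{n-m}^{-m-1/T}$, exactly as you outline. The coefficient-matching you flag as the main obstacle is precisely the content of the cited lemma and need not be redone; indeed, applying \cite[Lemma 5.1]{DJ2} to the vector $\mathbf{1}(-1)+U(V[g])_0^{-m-1/T}\in\mathbb{M}(m)\subseteq\Omega_m(\mathbb{M})$ for the admissible module $\mathbb{M}$ constructed just after the lemma yields the congruence with no further bookkeeping.
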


 Set
$\mathbb{M}=\bigoplus_{n \in (1/T)\mathbb{N} } U(V[g])_{n-m} / U(V[g])_{n-m}^{-m-1/T}$, then
 $\mathbb{M} $ is $(1/T)\mathbb{N}$-graded such that   $\mathbb{M}(n)=U(V[g])_{n-m} / U(V[g])_{n-m}^{-m-1/T}$.
We equip $\mathbb{M} $ with the vertex operator maps $Y_{\mathbb{M} }(u, z)=\sum_{p \in r/T+\mathbb{Z}} u_{p} z^{-p-1}$ for  $u\in V^r$, where for $n \in (1/T)\mathbb{N} $, the linear map $u_p$ from $\mathbb{M} (n)$ to $\mathbb{M} (n+\operatorname{wt} u-p-1)$ is defined as follows:
$$u_{p}(v  )=\left\{\begin{array}{ll}
u(p) v  , & \text { if } n+\operatorname{wt}u-p-1 \geq 0, \\
0, & \text { if } n+\operatorname{wt}u-p-1<0,
\end{array}\right.$$
for $v \in U(V[g])_{n-m} / U(V[g])_{n-m}^{-m-1/T}$.  Then $\mathbb{M} $ is an admissible $g$-twisted $V$-module, since the twisted Jacobi identity follows immediately from the construction of $U(V[g])$.

Now we begin to present the proof of Theorem \ref{thm1.1}.

\begin{proof}
Since $\mathbb{M}(m) = U(V[g])_{0} / U(V[g])_{0}^{ -m- 1/T} \subseteq {\Omega}_m(\mathbb{M})$, for any $u \in {O}_{g,n,m}(V)$, it follows from Theorem~\ref{thm3.2} that
$
0 = o_{m-n}(u)(\mathbf{1}(0) + U(V[g])_0^{ -m - 1/T}) = J_{m-n}(u) + U(V[g])_{n-m}^{-m - 1/T}.
$
Hence, $\varphi_{n,m}$ is well-defined.
Surjectivity of $\varphi_{n,m}$ follows from Lemma~\ref{lem8.3}. Suppose $u \in V$ satisfies
$
J_{m - n}(u) \in U(V[g])_{n - m}^{-m - 1/T}.
$
Then, by Remark~\ref{Remark 4.2}(1), the operator $o_{m-n}(u)$ acts as zero on ${\Omega}_m(M)$ for every weak $g$-twisted   $V$-module $M$. By Theorem~\ref{thm3.2}, this implies $u \in {O}_{g,n,m}(V)$. Hence  $\varphi_{n,m}$ is injective.
Let $u, v \in V$, using Lemma~\ref{lem8.4}, we compute
\begin{align*}
&\varphi_{n,m}\!\left( \bigl(u + {O}_{g,n,m}(V)\bigr) *_{g,m}^{\,n} \bigl(v + {O}_{g,m}(V)\bigr) \right) \\
 =& \varphi_{n,m}\!\left( u *_{g,m}^{\,n} v + {O}_{g,n,m}(V) \right)  
 =  J_{m - n}\bigl(u *_{g,m}^{\,n} v\bigr)  + U(V[g])_{n - m}^{-m - 1/T} \\
 =& J_{m - n}(u) J_{0}(v) + U(V[g])_{n - m}^{-m - 1/T} 
 =  \bigl(J_{m - n}(u) + U(V[g])_{n - m}^{-m - 1/T}\bigr) \cdot \bigl(J_{0}(v) + U(V[g])_0^{-m - 1/T}\bigr) \\
 = &\varphi_{n,m}\!\left(u + {O}_{g,n,m}(V)\right) \cdot \varphi_{m,m}\!\left(v + {O}_{g,m}(V)\right).
\end{align*}
When $m = n$, we have ${A}_{g,n}(V) = {A}_{g,n,n}(V)$ by Theorem~\ref{thm3.2}, and the above computation shows that $\varphi_{n,n}$ preserves multiplication. Since it is also bijective, $\varphi_{n,n}$ is an algebra isomorphism.
And $\varphi_{n,m}$ is a right ${A}_{g,m}(V)$-module homomorphism. A symmetric argument shows it is also a left ${A}_{g,n}(V)$-module homomorphism.
This completes the proof.
\end{proof}

\section*{Acknowledgment}
This work is supported by the National Natural Science Foundation of China (No. 12271406). The author is grateful to his supervisor Professor Jianzhi Han for his guidance.

%\bibliographystyle{plain}  
%\bibliography{refs}

\begin{thebibliography}{99}

\bibitem{DJ2}
Chongying Dong and Cuipo Jiang.
\newblock Bimodules and {$g$}-rationality of vertex operator algebras.
\newblock {\em Trans. Amer. Math. Soc.}, 360(8):4235--4262, 2008.

\bibitem{DJ1}
Chongying Dong and Cuipo Jiang.
\newblock Bimodules associated to vertex operator algebras.
\newblock {\em Math. Z.}, 259(4):799--826, 2008.

\bibitem{DLM2}
Chongying Dong, Haisheng Li, and Geoffrey Mason.
\newblock Twisted representations of vertex operator algebras.
\newblock {\em Math. Ann.}, 310(3):571--600, 1998.

\bibitem{DLM3}
Chongying Dong, Haisheng Li, and Geoffrey Mason.
\newblock Twisted representations of vertex operator algebras and associative algebras.
\newblock {\em Internat. Math. Res. Notices}, (8):389--397, 1998.

\bibitem{DLM1}
Chongying Dong, Haisheng Li, and Geoffrey Mason.
\newblock Vertex operator algebras and associative algebras.
\newblock {\em J. Algebra}, 206(1):67--96, 1998.

\bibitem{HX2}
Jianzhi Han and Yukun Xiao.
\newblock Associative algebras and universal enveloping algebras associated to {VOA}s.
\newblock {\em J. Algebra}, 564:489--498, 2020.

\bibitem{HXX1}
Jianzhi Han, Yukun Xiao, and Shun Xu.
\newblock Twisted bimodules and universal enveloping algebras associated to {VOA}s.
\newblock {\em J. Algebra}, 664:1--25, 2025.

\bibitem{H1}
Yi-Zhi Huang.
\newblock {\em Two-dimensional conformal geometry and vertex operator algebras}, volume 148 of {\em Progress in Mathematics}.
\newblock Birkh\"auser Boston, Inc., Boston, MA, 1997.

\bibitem{H3}
Yi-Zhi Huang.
\newblock Differential equations, duality and modular invariance.
\newblock {\em Commun. Contemp. Math.}, 7(5):649--706, 2005.

\bibitem{JJ1}
Wei Jiang and CuiBo Jiang.
\newblock Bimodules associated to vertex operator superalgebras.
\newblock {\em Sci. China Ser. A}, 51(9):1705--1725, 2008.

\bibitem{KW1}
Victor Kac and Weiqiang Wang.
\newblock Vertex operator superalgebras and their representations.
\newblock In {\em Mathematical aspects of conformal and topological field theories and quantum groups ({S}outh {H}adley, {MA}, 1992)}, volume 175 of {\em Contemp. Math.}, pages 161--191. Amer. Math. Soc., Providence, RI, 1994.

\bibitem{Shun2}
Shun Xu.
\newblock Bimodules and universal enveloping algebras associated to {SVOA}s.
\newblock {\em J. Pure Appl. Algebra}, 229(9):Paper No. 108037, 11, 2025.

\bibitem{Shun3}
Shun Xu.
\newblock Bimodules and associative algebras associated to {SVOA}s over an arbitrary field.
\newblock {\em J. Geom. Phys.}, 220:105705, 2026.

\bibitem{Z1}
Yongchang Zhu.
\newblock Modular invariance of characters of vertex operator algebras.
\newblock {\em J. Amer. Math. Soc.}, 9(1):237--302, 1996.

\end{thebibliography}

\end{document}